\theoremstyle{plain}
\newtheorem{theorem}{Theorem}
\theoremstyle{plain}
\newtheorem{lemma}{Lemma}
\theoremstyle{plain}
\newtheorem{cor}{Corollary}
\theoremstyle{plain}
\newtheorem{definition}{Definition}
\theoremstyle{plain}
\newtheorem{remark}{Remark}
\begin{document}

\begin{center}\large\bf
S.~A.~Chumachenko, S.~F.~Lukomskii, P.~A.~Terekhin
\\
\Large
Riesz Bounds  \\ of Spline Affine Systems
\end{center}
\noindent
 N.G.\ Chernyshevskii Saratov State University, Russia\\
  	  chumachenkosergei@gmail.com \\
  LukomskiiSF@info.sgu.ru\\
   terekhinpa@mail.ru \\
   MSC:Primary 41A15; Secondary 42C15, 42C40, 46B15
\begin{center}
Abstracts
\end{center}

We construct a family of spline affine Riesz bases, i.e. sequences of dilations and translations generated by the special spline functions
$\psi_m$, and we prove that their Riesz bounds are independent of $m$.
We put $\psi_0=\chi$ as the Haar step function and every following function $\psi_{m+1}$ is obtained by integrating the previous function  $\psi_m$ with consequent antiperiodization. We give a representation of the spline $\psi_m$ as a finite sum of Rademacher chaos series
and we use a notion of a simple Walsh spectrum of a function in connection with orthogonality of affine systems.

{\it Keywords:} Riesz basis, Riesz bounds, affine system, Haar system, Walsh system, Rademacher chaos.

\section*{Introduction}

It is well-known that affine Haar systems, i.e. sequences of dilations and translations of functions on the unit interval,
possess many useful properties such as localization, coherency and basicity
(the last property holds under certain conditions on the generating function).
The classical dyadic Haar system is the only orthonormal basis among such affine systems.
Nevertheless, there is a wide class of functions that generates affine Riesz bases.
Note that here we are talking about generating functions with support in the unit interval.

The Haar system consists of discontinuous functions.
The problem of the inclusion of the Haar system in a more general class of similar functional sequences consisting of smooth functions has a long history.
Franklin and Ciesielski systems were obtained in this way but they do not have localization and coherence properties.
These systems may be constructed by integration of Haar functions and using the Gram~-~Schmidt process.
Therefore, these systems are difficult to use in numerical applications.
Another way to solve this problem is to use wavelet theory methods (see, for example,~\cite{Chu},~\cite{Dau},~\cite{NPS}).
Within the framework of this theory spline wavelets of Stromberg \cite{Str}, Battle~-~Lemarier \cite{Bat}, Chui~-~Wang \cite{CW}
based on the concept of $B$-splines were constructed.

In this paper we construct Haar type affine Riesz bases in the space $L^2(0,1)$ generated by a special spline functions $\psi_m$,
$m=1,2,\dots$.
The generating functions $\psi_m$ of these systems are m-th integral of the Walsh functions $w_{2^{m+1}-1}$ (Figures 1 and 2).
We will use another representation of the generating function $\psi_m$ under which the following function $\psi_{m+1}$  is obtained by integrating the previous function $\psi_m$ with consequent antiperiodization. The zero mean value of the generating function is a necessary condition for an affine system to be a basis. So antiperiodic functions $\psi_m$ are considered (non-strictly speaking, the Haar step function can be regarded as antiperiodic). Thus for recursive integration we need an appropriate number of antiperiodizations and the Walsh function $w_{2^{m+1}-1}$ arises naturally.

First of all, we show that each of the constructed spline affine systems is a Riesz basis (Theorem~\ref{t.1}).
Secondly, we prove that the Riesz bounds of all such affine systems can be chosen universally, i.e. independently of $m$ (Theorem~\ref{t.2}).
This means that the spline affine systems have global stability for arbitrary smoothness.

\section{ Main results}\label{s.1}

Put $r(t)=\mbox{sign}\sin(2\pi t)$. Then $r_k(t)=r(2^kt)$, $k=0,1,\dots$, is a Rademacher sequence.
For $m=1,2,\dots$ we define the $1$-periodic spline $\psi_m(t)$ of order $m$ with knots $\frac{1}{2^{m+1}}\mathbb{Z}$ by
\begin{equation}\label{eq.1.1}
\frac{d^m}{dt^m}\psi_m(t)=\varkappa_m\prod_{k=0}^mr_k(t), \qquad t\notin\frac{1}{2^{m+1}}\mathbb{Z},
\end{equation}
\begin{equation}\label{eq.1.2}
\frac{d^{\mu}}{dt^{\mu}}\psi_m(t)\biggr|_{t=0}=0, \qquad \mu=0,\dots,m-1,
\end{equation}
where $\varkappa_m=2^{m(m+5)/2}$.
Figures 1 and 2 show the graphs of the functions $\psi_1(t){\chi_{[0,1]}}$ and $\psi_m(t){\chi_{[0,1]}}$, $m>1$.

\unitlength=0.60mm
 \begin{picture}(80,90)
 \put(-5,40){\line(1,0){85}}
 \put(-5,-2){\line(0,1){89}}
 \put(-5,40){\line(1,2){20}}
 \put(15,80){\line(1,-2){40}}
 \put(55,0){\line(1,2){20}}

 \put(31,33){\small $ \frac12$}
\put(73,34){\small $ 1$}
 \multiput(15,40)(0,6){7}{\line(0,1){4}}
 \multiput(55,40)(0,-6){7}{\line(0,-1){4}}
 \multiput(-5,80)(5,0){4}{\line(1,0){4}}
 \put(14,33){\small $ \frac14$}
 \put(52,44){\small $ \frac34$}
 \put(-9,78){\small $ 2$}
  \put(30,-7){{\bf Fig. 1.}\ $\psi_1(t)$}

 \end{picture}\qquad\qquad
  \unitlength=0.60mm
 \begin{picture}(84,90)
 \put(0,40){\line(1,0){84}}
 \put(0,-2){\line(0,1){88}}
\qbezier(0,40)(5,40)(10,60)
\qbezier(10,60)(15,80)(20,80)
\qbezier(20,80)(25,80)(30,60)
\qbezier(30,60)(35,40)(40,40)

\qbezier(39,40)(45,40)(50,20)
\qbezier(50,20)(55,0)(60,0)
\qbezier(60,0)(65,0)(70,20)
\qbezier(70,20)(75,40)(80,40)
\put(36,33){\small $ \frac12$}
\put(78,34){\small $ 1$}
 \multiput(20,40)(0,6){7}{\line(0,1){4}}
 \multiput(60,40)(0,-6){7}{\line(0,-1){4}}
 \multiput(0,80)(5,0){4}{\line(1,0){4}}
 \put(19,33){\small $ \frac14$}
 \put(57,44){\small $ \frac34$}
 \put(-5,78){\small $ 2$}
 \put(30,-8){{\bf Fig. 2.} \ $\psi_m(t)$}

\end{picture}\\

\vspace{3mm}
If we put formally $m=0$ in equation (\ref{eq.1.1}) then we obtain Haar system
$(\chi_n)_{n\ge0}$ that is a sequence of dilations and translations of the function $\chi=r\chi_{[0,1]}$.

\begin{definition}\label{d.0}
Let $f:\mathbb{R}\to\mathbb{R}$ and $\mbox{supp}\,f\subset[0,1]$.
The system of dilations and translations of the function $f$ is a sequence of functions
$$
f_0(t)=1, \qquad f_n(t)=2^{k/2}f(2^kt-j), \quad n=1,2,\dots,
$$
where $n=2^k+j$, $k=0,1,\dots$ and $j=0,\dots, 2^k-1$.
\end{definition}

\begin{definition}\label{d.1}
The  sequence of dilations and translations  $(\psi_{m,n})_{n\ge0}$ of the function $\psi_m\chi_{[0,1]}$ is called a spline affine system of order $m$.
\end{definition}

\begin{theorem}\label{t.1}
For any $m=1,2,\dots$ the spline affine system $(\psi_{m,n})_{n\ge0}$ is a Riesz basis in $L^2(0,1)$.
\end{theorem}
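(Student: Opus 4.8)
The plan is to realize the system $(\psi_{m,n})_{n\ge0}$ as the image of the Haar orthonormal basis $(\chi_n)_{n\ge0}$ under a bounded operator with bounded inverse. Define the \emph{subordination operator} $T_m$ on $L^2(0,1)$ by $T_m\chi_0=1$ and $T_m\chi_n=\psi_{m,n}$ for $n\ge1$; since $(\chi_n)_{n\ge0}$ is an orthonormal basis, Theorem~\ref{t.1} is equivalent to the statement that $T_m$ is well defined, bounded and boundedly invertible, because the image of an orthonormal basis under such an operator is precisely a Riesz basis. A preliminary observation is that $\int_0^1\psi_m=0$, the zero-mean condition necessary for an affine system to be a basis: by (\ref{eq.1.1})--(\ref{eq.1.2}), $r_0$ is antiperiodic while $r_1,\dots,r_m$ are $\tfrac12$-periodic, so $\prod_{k=0}^mr_k$ is antiperiodic, and integrating $m$ times the normalization at $t=0$ together with $1$-periodicity propagates the relation $\psi_m(t+\tfrac12)=-\psi_m(t)$, whence $\int_0^1\psi_m=0$. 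Thus $T_m$ maps the zero-mean subspace $H_0=\overline{\operatorname{span}}\{\chi_n:n\ge1\}$ into itself and is the identity on its one-dimensional orthogonal complement, so it suffices to analyze $T_m$ on $H_0$.

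Next I would extract the combinatorial structure of $\psi_m$. Integrating (\ref{eq.1.1}) successively and using (\ref{eq.1.2}), each integration of a Rademacher product $r_{k_1}\cdots r_{k_d}$ produces a triangular-wave term together with corrections of the same or smaller chaos order; this represents $\psi_m$ as a finite sum of Rademacher chaos series, and, after expanding in the Haar system, it shows that the coefficients $\alpha_n^{(m)}=\langle\psi_m\chi_{[0,1]},\chi_n\rangle$ form a sparse, self-similar family decaying geometrically in the dyadic scale of $n$ — the \emph{simple Walsh spectrum} of $\psi_m$. Two features are used. First, $\alpha_1^{(m)}=\langle\psi_m,\chi\rangle=2\int_0^{1/2}\psi_m$ is positive, as the explicit spline shows (cf.\ Figures 1--2). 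Second, writing $n=2^k+j$ and rescaling gives
\begin{equation*}
\psi_{m,n}=\alpha_1^{(m)}\chi_n+\sum_{l\ge2}\alpha_l^{(m)}\chi_{l\diamond n},
\end{equation*}
where $\chi_{l\diamond n}$ is the Haar function supported on the dyadic subinterval of $\Delta(n)=[\,j2^{-k},(j+1)2^{-k}\,]$ obtained by placing $\operatorname{supp}\chi_l$ inside $\Delta(n)$; every index $l\diamond n$ with $l\ge2$ lies at a dyadic scale strictly deeper than that of $n$. Hence, on $H_0$, $T_m=\alpha_1^{(m)}\operatorname{id}+R_m$ with $R_m$ \emph{scale-lowering}: it carries each Haar function of scale $k$ into a combination of Haar functions of scales $>k$. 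In particular $T_m$ is injective — for $T_mh=0$ one looks at the coarsest scale occurring in $h$, notes that $R_m$ contributes nothing there, and concludes that $h$ has no such component, i.e.\ $h=0$ — and $\alpha_1^{(m)}$ is the natural diagonal of $T_m$.

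The upper Riesz bound, i.e.\ that $T_m$ is a bounded (Bessel) operator, follows from the geometric decay in scale of the $\alpha_n^{(m)}$ together with the disjointness of supports of Haar functions of the same scale, by a Schur-type almost-orthogonality estimate on the dyadic tree (using that the Haar coefficients of the spline $\psi_m$ are absolutely summable). The genuine obstacle is the lower Riesz bound: one must show that $T_m$ is boundedly invertible, which also gives completeness of $(\psi_{m,n})$. Here the structure of the Walsh spectrum of $\psi_m$ is decisive: the alternating signs of the coefficients $\alpha_n^{(m)}$ create systematic cancellations in the nested interactions $\langle\psi_{m,n},\psi_{m,n'}\rangle$, equivalently in the composite coefficients entering the powers $R_m^{\,j}$, and exploiting them one proves $\|R_m^{\,j}\|\to0$, so that the Neumann series $(\alpha_1^{(m)})^{-1}\sum_{j\ge0}\bigl(-(\alpha_1^{(m)})^{-1}R_m\bigr)^{j}$ converges and furnishes $T_m^{-1}$ — or, equivalently, that the Gram operator $T_m^{*}T_m$ is bounded below. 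Carrying out this estimate — pinning down which Walsh coefficients of $\psi_m$ are nonzero and quantifying the cancellation across scales, aided by the recursion that produces $\psi_{m+1}$ from $\psi_m$ by integration and antiperiodization — is the technical heart of the argument, and it is this analysis, made quantitative, that yields the $m$-independent bounds of Theorem~\ref{t.2}.
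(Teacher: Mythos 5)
Your setup coincides with the paper's: realize $(\psi_{m,n})_{n\ge0}$ as the image of the Haar basis under a subordination operator $T_m$ and show $T_m$ is an isomorphism. In fact your decomposition $T_m=\alpha_1^{(m)}\,\mathrm{id}+R_m$ is numerically the paper's, since $(\psi_m,r)=1$ for every $m$ (the chaos components of order $\ge3$ are orthogonal to $r=r_0$), so $\alpha_1^{(m)}=1$ and $R_m=-(I-T_{\psi_m})$. But there is a genuine gap exactly where you locate the ``technical heart'': the invertibility of $T_m$, i.e.\ the lower Riesz bound and completeness. The observation that $R_m$ carries Haar functions of scale $k$ into strictly deeper scales gives injectivity and nothing more: a weighted shift on $\ell^2$ also strictly increases the index yet satisfies $\|R^j\|=\|R\|^j$, so quasi-nilpotence does not follow from the triangular structure alone, and the appeal to ``systematic cancellations from alternating signs'' is a heuristic, not an argument --- no quantitative bound on $\|R_m\|$ or on $\|R_m^j\|$ is produced, and without one the proof does not close.

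The paper closes this gap by a specific mechanism absent from your sketch. It writes $\psi_m=r-\sum_{s=1}^m f_{m,s}$ with $f_{m,s}$ a Rademacher chaos of order $2s+1$ (Lemma~\ref{l.2}), notes that each $f_{m,s}$, being an antiperiodization of a chaos function, has a \emph{simple Walsh spectrum}, and proves (Lemma~\ref{l.4}) that the affine Haar system generated by such a function is \emph{orthogonal}; consequently $\|T_{f_{m,s}}\|=\|f_{m,s}\|$ exactly, whence $\|I-T_{\psi_m}\|\le\sum_{s=1}^m\|f_{m,s}\|$. The remaining work (Lemma~\ref{l.3} and the estimates of Section~\ref{s.4}) bounds this sum by $\frac{9}{10}$ uniformly in $m$, which yields invertibility by a Neumann series and the universal bounds $\frac{1}{10}$, $\frac{19}{10}$ simultaneously. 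Your Schur-type argument could plausibly deliver the upper (Bessel) bound, but for the lower bound you would need to replace the cancellation heuristic by something of this kind --- an exact or near-exact norm computation for the off-diagonal part, here achieved through orthogonality of the affine systems generated by each chaos component. As written, the proposal does not prove the theorem.
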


We recall that the sequences $(\psi_n)_{n\ge0}$ of elements of Hilbert space $H$ is called a Riesz basis if there exists an orthonormal basis $(e_n)_{n\ge0}$ of $H$ and a bounded invertible linear operator (isomorphism) $T:H\to H$ such that
$\psi_n=Te_n$ for all  $n=0,1,\dots$.
Positive constants $A\le B$ are called  Riesz bounds of a sequence $(\psi_n)_{n\ge0}$ if for any $c=(c_n)_{n\ge0}\in\ell^2$ the inequalities
$$
A\|c\|_2\le\biggl\|\sum_{n=0}^{\infty}c_n\psi_n\biggr\|\le B\|c\|_2.
$$
are satisfied. It is clear that $A=\|T^{-1}\|^{-1}$ and $B=\|T\|$ for Riesz bases.
The main question of this article: are there universal (independent of $m=1,2,\dots$) Riesz bounds for considered spline affine systems?
A positive response is given by the following main theorem.

\begin{theorem}\label{t.2}
If $c=(c_n)_{n\ge0}\in \ell^2$ then
$$
\frac{1}{10}\|c\|_2\le\biggl\|\sum_{n=0}^{\infty}c_n\psi_{m,n}\biggr\|\le\frac{19}{10}\|c\|_2
$$
for any $m\in\mathbb N$.
\end{theorem}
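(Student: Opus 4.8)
The plan is to realize the affine system as a small perturbation of an orthonormal system. The starting point is the identity $\langle\psi_m,\chi\rangle=1$, valid for every $m$: since $\psi_{m+1}$ is obtained from $\psi_m$ by integration followed by antiperiodization (with the normalization contained in $\varkappa_{m+1}/\varkappa_m$), one checks that $\langle\psi_{m+1},\chi\rangle$ is a fixed multiple of $-\langle\psi_m,t\rangle$, while those same operations leave $\langle\psi_m,t\rangle$ unchanged; since $\langle\psi_1,t\rangle=\langle\chi,t\rangle=-\tfrac14$, this propagates to $\langle\psi_m,t\rangle=-\tfrac14$ and hence $\langle\psi_m,\chi\rangle=1$ for all $m$. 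Consequently the bounded operator $T$ on $L^2(0,1)$ determined on the Haar basis by $T\chi_n=\psi_{m,n}$ has the form $T=I+N_m$, where $N_m$ annihilates the constant function and, for $n\ge1$, carries $\chi_n$ to $(\psi_m-\chi)_n=\phi_{m,n}$ — that is, $N_m$ is the dilation--translation operator generated by $\phi_m:=\psi_m-\chi$. Since $\bigl\|\sum_n c_n\psi_{m,n}\bigr\|=\|(I+N_m)x\|$ for $x=\sum_n c_n\chi_n$ and $\|x\|=\|c\|_2$, the theorem becomes equivalent to the single uniform estimate $\|N_m\|\le\tfrac9{10}$, which indeed gives $\tfrac1{10}\|c\|_2=(1-\tfrac9{10})\|c\|_2\le\bigl\|\sum_n c_n\psi_{m,n}\bigr\|\le(1+\tfrac9{10})\|c\|_2=\tfrac{19}{10}\|c\|_2$.

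To estimate $\|N_m\|$ I would use the representation of $\psi_m$ as a finite sum of Rademacher chaos series and the notion of a simple Walsh spectrum. The function $\phi_m$ is supported in $[0,1]$, has mean zero, is $\tfrac12$-antiperiodic, and is orthogonal to all polynomials of degree $\le 2$ (its moments of orders $0,1,2$ coincide with those of $\chi$). From this one shows that the dilation--translation system generated by $\phi_m$ is orthogonal when $m=1$ and, in general, orthogonal up to a small and explicit defect: if $I_n\subsetneq I_{n'}$, then the restriction of $\phi_{m,n'}$ to $I_n$ is, after rescaling, the restriction of $\phi_m$ to a dyadic interval, hence a polynomial once that interval is contained in a single knot interval of $\psi_m$; on such relative positions $\langle\phi_{m,n},\phi_{m,n'}\rangle$ is governed by the higher derivatives of $\psi_m$ and so is small (decaying in the number of levels between $n$ and $n'$, and tending to $0$ as $m\to\infty$), while the finitely many coarser positions are dealt with using the antiperiodicity and the symmetries of $\psi_m$. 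Combined with $\|\phi_{m,n}\|=\|\phi_m\|_2$, this reduces the control of $\|N_m\|$ — and, more precisely, of $\bigl\|\sum_n c_n\psi_{m,n}\bigr\|^2=\|x\|^2+2\operatorname{Re}\langle x,N_mx\rangle+\|N_mx\|^2$ — to the scalar $\|\phi_m\|_2$ plus a small, uniformly bounded Gram correction.

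It then remains to bound $\|\phi_m\|_2$ uniformly in $m$. The recursion built into the construction reads $\phi_m=\phi_1+\Theta\phi_{m-1}$, where $\Theta$ is a fixed scalar multiple of the composition of integration and antiperiodization. Antiperiodization is an isometry of $L^2(0,1)$, and the Volterra operator $Jf(t)=\int_0^t f$ is strongly contracting on the subspace of $\tfrac12$-antiperiodic functions $f$ with $\int_0^{1/2}f=0$ — a subspace that contains every $\phi_{m-1}$ and is invariant under $\Theta$ — so $\Theta$ is a contraction there. Hence $\phi_m=\sum_{i=1}^m\delta_i$ with $\delta_i=\Theta^{i-1}\phi_1$, the norms $\|\delta_i\|_2$ decaying geometrically (in fact faster, since each application of $\Theta$ shifts the spectrum to higher frequencies); summing, with the exact base values $\|\phi_1\|_2^2=\tfrac13$ and $\|\delta_2\|_2^2=\tfrac1{30}$, keeps $\|\phi_m\|_2$ small enough that, together with the Gram correction of the previous step, $\|N_m\|\le\tfrac9{10}$ for all $m$. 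Combining the three steps proves the theorem.

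The main obstacle I anticipate is precisely this quantitative estimate — getting the constant $\tfrac9{10}$. Crude bounds fail: splitting $\psi_m$ into its Haar levels gives only $\|N_m\|\le\sum_{p\ge1}\|P_p\psi_m\|_2$, with $P_p$ the projection onto level $p$, and this sum already equals $1$ for $m=1$, while the triangle inequality applied to $\phi_m=\phi_1+\Theta\phi_{m-1}$ likewise produces a constant exceeding $1$. One must use genuine cancellation: the (near-)orthogonality of the affine system generated by $\phi_m$, which collapses $\|N_m\|$ essentially onto the single number $\|\phi_m\|_2$, together with a careful, non-$\ell^1$ estimate of the series for $\phi_m$ based on the contraction and the frequency shift in $\Theta$.
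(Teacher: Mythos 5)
Your reduction is exactly the paper's: write the synthesis operator as $T_{\psi_m}=I+N_m$ where $N_m$ is the affine (dilation--translation) operator generated by $\phi_m=\psi_m-\chi$, and reduce everything to the single uniform estimate $\|N_m\|\le\frac9{10}$. You also correctly diagnose that this estimate is the whole difficulty and that crude level-by-level or triangle-inequality bounds overshoot $1$. But the mechanism you propose for the crucial estimate is not carried out and, as stated, does not work. You want to treat the affine system generated by $\phi_m$ as ``orthogonal up to a small Gram defect'' controlled by vanishing moments and the polynomial structure of $\psi_m$ on fine scales. For $m\ge2$ this system is genuinely non-orthogonal: $\phi_m$ is a sum of Rademacher chaoses of orders $3,5,\dots,2m+1$, its Walsh spectrum contains both $(1,1,1)$ and $(1,1,1,1,1)$, so the spectrum is not simple in the sense of Definition~\ref{d.5} and Lemma~\ref{l.4} does not apply to $\phi_m$ itself. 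Your quantitative claims about the defect (``governed by the higher derivatives of $\psi_m$ and so is small'', ``a small, uniformly bounded Gram correction'') are never turned into numbers, and note also that $\phi_m=\psi_m-\chi$ inherits the jump discontinuities of $\chi$, so the restriction of $\phi_{m,n'}$ to a finer dyadic interval is \emph{not} a polynomial near those jumps. This is precisely the step you yourself flag as the main obstacle, and it remains open in your write-up.

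The paper's resolution is different and is the idea you are missing: instead of studying the Gram matrix of the system generated by $\phi_m$ as a whole, decompose $\phi_m=-\sum_{s=1}^m f_{m,s}$ with each $f_{m,s}$ lying in a \emph{single} Rademacher chaos of odd order $2s+1$ (Lemma~\ref{l.2}). Each such $f_{m,s}$, being an antiperiodization of an element of $\mathcal{R}_{ch}^{2s}$, \emph{does} have a simple Walsh spectrum, so by Lemma~\ref{l.4} the affine system it generates is exactly orthogonal and $\|T_{f_{m,s}}\|=\|f_{m,s}\|_2$ with no Gram correction at all. Then $\|N_m\|\le\sum_s\|f_{m,s}\|_2$, and the cancellation you are looking for is captured by computing $f_{m,1}$ explicitly (Lemma~\ref{l.3}, giving $\|f_{m,1}\|<\frac79$) and showing that the higher-order chaos contributions satisfy a contractive recursion with ratio $\frac{\sqrt3}{16}$, so that $\sum_{s\ge2}\|f_{m,s}\|<0.12$ uniformly in $m$. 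Your recursion $\phi_m=\phi_1+U\phi_{m-1}$ and the contractivity of $U$ are correct observations and are close in spirit to the paper's recursion for $g_{m,s},h_{m,s+1}$, but applied directly to $\phi_m$ they only bound $\|\phi_m\|_2$, which by itself does not bound $\|N_m\|$ without the orthogonality supplied by the chaos decomposition. As it stands, your proposal establishes the case $m=1$ (where $\phi_1=-W_1^2\lambda$ does have a simple spectrum, so $\|N_1\|=\|\phi_1\|=\frac1{\sqrt3}$) but leaves the general case unproved.
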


To prove these results we represent the spline $\psi_m$ as a finite sum of Rademacher chaos series of odd orders $d=2s+1$, $s=0,\dots,m$.
Note that for $m=1$ Theorem 1 was proved in \cite{T2}.

\section{Preliminaries}\label{s.2}

At first, we give the inductive construction of splines $\psi_m$.
Let
$$
Vf(t)=\int_0^tf(s)\,ds
$$
be the Volterra operator.
For  $1$-periodic function  $f(t)$ we define dilation~-~modulation operators $W_0,W_1$ in the following way
$$
W_0f(t)=f(2t), \qquad W_1f(t)=r(t)f(2t).
$$
Further, we put
$$
U=4W_1V.
$$
Note that the function  $f=W_1g$ is $\frac12$-antiperiodic, i.e. $f(t+\frac12)=-f(t)$. If a function $f$ is antiperiodic and $(f,r)=1$ then
$(Uf,r)=1$.

\begin{lemma}\label{l.1}
For any $m\in\mathbb{N}$ we have $\psi_m=U^mr$.
\end{lemma}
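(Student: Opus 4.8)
The plan is to argue by induction on $m$, with base case $m=0$: reading (\ref{eq.1.1})--(\ref{eq.1.2}) formally at $m=0$ (so $\varkappa_0=1$ and the list of initial conditions is empty) gives $\psi_0=r=U^0r$, in accordance with the remark after (\ref{eq.1.2}). Through the induction I would carry not just the identity $\psi_m=U^m r$ but the whole package that pins $\psi_m$ down: $\psi_m$ is $1$-periodic, is a spline of order $m$ with knots $\tfrac1{2^{m+1}}\mathbb Z$ (hence of class $C^{m-1}$), obeys (\ref{eq.1.1}) off the knots and (\ref{eq.1.2}) at $0$, and --- the extra fact the step needs --- has zero mean $\int_0^1\psi_m=0$. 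For $m\ge1$ this zero-mean property is exactly the antiperiodicity observation recorded before the lemma: $\psi_m=U\psi_{m-1}=4W_1(V\psi_{m-1})$, and every function of the form $W_1g$ is $\tfrac12$-antiperiodic, hence integrates to $0$; for $m=0$ it holds for $r$ directly. These properties determine $\psi_m$ uniquely --- (\ref{eq.1.1}) fixes $\psi_m^{(m)}$ off the knots, (\ref{eq.1.2}) fixes $\psi_m$ on the first interval $[0,2^{-m-1}]$, $C^{m-1}$-continuity propagates it across the remaining knots of $[0,1]$, and $1$-periodicity does the rest --- so it suffices to check that $U\psi_m$ enjoys all the same properties with $m$ replaced by $m+1$.

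Set $g=V\psi_m$, so that $U\psi_m(t)=4W_1g(t)=4r(t)g(2t)$ and $g'=\psi_m$. Since $\int_0^1\psi_m=0$, the primitive $g$ is itself $1$-periodic, lies in $C^m$, and is a spline of order $m+1$ with knots $\tfrac1{2^{m+1}}\mathbb Z$; hence $g(2\,\cdot\,)$ is a $\tfrac12$-periodic spline of order $m+1$ with knots $\tfrac1{2^{m+2}}\mathbb Z$. As $r\equiv\pm1$ is locally constant off $\tfrac12\mathbb Z\subset\tfrac1{2^{m+2}}\mathbb Z$, on each interval complementary to $\tfrac1{2^{m+2}}\mathbb Z$ the product $4r(t)g(2t)$ is again such a spline. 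At a point $t_0\in\tfrac12\mathbb Z$, where $r$ changes sign, the two adjacent polynomial pieces of $U\psi_m$ are $+4\varepsilon g(2t)$ and $-4\varepsilon g(2t)$ with $\varepsilon=\pm1$, so they glue in a $C^m$ fashion precisely when $g^{(j)}(2t_0)=0$ for $j=0,\dots,m$; and indeed $g(2t_0)=0$ by the zero-mean property and periodicity, while $g^{(j)}(2t_0)=g^{(j)}(0)=\psi_m^{(j-1)}(0)=0$ for $1\le j\le m$ by (\ref{eq.1.2}) and periodicity. Thus $U\psi_m$ is a $1$-periodic spline of order $m+1$ with knots $\tfrac1{2^{m+2}}\mathbb Z$.

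It remains to verify (\ref{eq.1.1}) and (\ref{eq.1.2}) for $U\psi_m$. Off the knots $\tfrac1{2^{m+2}}\mathbb Z$ (which contain every sign change of $r$), differentiating $4r(t)g(2t)$ exactly $m+1$ times, with $r$ locally constant, gives
$$\frac{d^{m+1}}{dt^{m+1}}U\psi_m(t)=4r(t)\cdot2^{m+1}g^{(m+1)}(2t)=2^{m+3}r_0(t)\,\varkappa_m\prod_{k=0}^{m}r_k(2t)=\varkappa_{m+1}\prod_{k=0}^{m+1}r_k(t),$$
using $g^{(m+1)}=\psi_m^{(m)}$ and (\ref{eq.1.1}) for $\psi_m$, the identity $r_k(2t)=r_{k+1}(t)$, and the relation $\varkappa_{m+1}=2^{m+3}\varkappa_m$, which is immediate from $\varkappa_m=2^{m(m+5)/2}$. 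For the initial conditions, near $t=0$ we have $r\equiv1$, so $U\psi_m(t)=4g(2t)$ and $\frac{d^{\mu}}{dt^{\mu}}U\psi_m(0)=4\cdot2^{\mu}g^{(\mu)}(0)$, which is $0$ for $\mu=0$ by zero mean and, for $1\le\mu\le m$, equals $4\cdot2^{\mu}\psi_m^{(\mu-1)}(0)=0$ by (\ref{eq.1.2}). By the uniqueness noted above, $U\psi_m=\psi_{m+1}$, and its zero mean follows from the form $4W_1g$, closing the induction. The one genuinely delicate point --- the one I would write out most carefully --- is the $C^m$-gluing at the sign changes of $r$: applying $W_1$ after integration must not spoil smoothness at half-integers. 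This is exactly where the zero mean of $\psi_m$ and the vanishing of its low-order derivatives at $0$ are essential, and it is the structural reason the construction antiperiodizes at each step rather than merely integrating; everything else is bookkeeping with the knot set and the constant $\varkappa_m$.
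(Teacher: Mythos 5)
Your argument is correct, but it is a genuinely different route from the paper's. The paper proves the lemma by a purely algebraic computation: it first establishes the commutation relation $VW_if=\tfrac12W_iVf$ for $1$-periodic zero-mean $f$, uses it to collapse $U^mr=(4W_1V)^mr$ into $4^m2^{1+\dots+m}V^mW_1^mr=\varkappa_mV^m\prod_{k=0}^mr_k$, and then identifies this with $\psi_m$ because $V^m$ inverts $\tfrac{d^m}{dt^m}$ under the initial conditions \eqref{eq.1.2}. You instead run an induction in which $U\psi_m$ is shown to satisfy all the defining properties of $\psi_{m+1}$ (knot set, equation \eqref{eq.1.1} with the constant $\varkappa_{m+1}=2^{m+3}\varkappa_m$, conditions \eqref{eq.1.2}, smoothness class), supplemented by a uniqueness argument for the initial-value problem \eqref{eq.1.1}--\eqref{eq.1.2}. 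What your approach buys is that it makes explicit the analytic content that the paper's one-line computation hides: the $C^m$ gluing of $4r(t)g(2t)$ at the half-integers, which is exactly where the zero mean of $\psi_m$ and the vanishing of its lower derivatives at $0$ enter, and which explains structurally why antiperiodization after each integration does not destroy smoothness. (The paper's version needs essentially the same facts, but they are absorbed into the repeated application of the commutation relation, whose hypothesis --- zero mean of each intermediate function $V^iW_1^jr$ --- must also be tracked, though the paper does not dwell on it.) The price you pay is the extra uniqueness step and the slightly delicate base case $m=0$, which relies on reading \eqref{eq.1.1}--\eqref{eq.1.2} formally at $m=0$ as the paper itself sanctions; if one prefers to avoid that, one can start the induction at $m=1$ by checking $Ur=\psi_1$ directly, as the paper effectively does in Lemma~\ref{l.2}. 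I find no gap in your proof.
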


\begin{proof}
If a 1-periodic integrable function $f$ has zero mean value $\int_0^1f(t)\,dt=0$ then
\begin{equation}\label{eq.2.1}
VW_0f(t)=\frac12 W_0Vf(t), \qquad VW_1f(t)=\frac12 W_1Vf(t).
\end{equation}
Indeed, since $\int_0^1f(t)\,dt=0$ it follows that the functions $VW_if$ and $W_iVf$, $i=0,1$ are absolutely continuous and equal to zero at the points $\frac12\mathbb{Z}$.
Since the derivative  $(W_iVf)'(t)=2r^i(t)f(2t)$ is equal to $2(VW_if)'(t)$ a.e. in $\mathbb{R}$, it follows that the equalities~\eqref{eq.2.1} hold for all $t\in\mathbb{R}$.

Using \eqref{eq.1.1} -- \eqref{eq.2.1} we get
$$
U^mr=\underbrace{4W_1V\dots4W_1V}_mr=4^m2^{1+\ldots+m}V^mW_1^mr=
\varkappa_mV^mW_1^mr=
$$
$$
=\varkappa_mV^m\prod_{k=0}^mr_k=V^m\frac{d^m}{dt^m}\psi_m=\psi_m.
$$
\end{proof}

Lemma~\ref{l.1} shows that $\psi_{m+1}=U\psi_m $, i.e. each successive spline function is an antiperiodization of integral of the previous spline. Obviously, the 0-th function $r=W_11$ is antiperiodic too (hereinafter $1$ is the function identically equal to one).

\begin{definition}\label{d.2}
Rademacher chaos of order  $d\in\mathbb{N}$ is a family of functions $r_{i_1}\dots r_{i_d}$, $0\le i_1<\ldots<i_d$, where $r_{i_j}$
are Rademacher functions. The set of all functions
$$
f=\sum_{0\le i_1<\ldots<i_d}a_{i_1,\dots,i_d}r_{i_1}\dots r_{i_d}, \qquad \sum_{0\le i_1<\ldots<i_d}|a_{i_1,\dots,i_d}|^2<\infty,
$$
we will denote  $\mathcal{R}_{ch}^d$.
\end{definition}

It is not difficult to see that $r_{i_1}\dots r_{i_d}=W_0^{k_1}W_1\dots W_0^{k_d}W_11$, \ $k_1,\dots,k_d\ge0$, where $i_1=k_1$, $i_2=k_1+k_2+1$, \ldots, $i_d=k_1+\dots+k_d+d-1$. It follows that any function  $f\in\mathcal{R}_{ch}^d$ may be written in the form
$$
f=\sum_{k_1,\dots,k_d\ge0}c_{k_1,\dots,k_d}W_0^{k_1}W_1\dots W_0^{k_d}W_11, \qquad \sum_{k_1,\dots,k_d\ge0}|c_{k_1,\dots,k_d}|^2<\infty.
$$

\begin{lemma}\label{l.2}
For any $m\in\mathbb R$ the following representation holds
\begin{equation}\label{eq.2.2}
\psi_m=r-\sum_{s=1}^mf_{m,s}, \qquad m=1,2,\dots,
\end{equation}
where $f_{m,s}\in\mathcal{R}_{ch}^d$ for $d=2s+1$, $s=1,\dots,m$.
\end{lemma}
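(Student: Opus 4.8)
The plan is to prove, by induction on $m$, the sharper statement that $\psi_m=U^mr$ lies in $\bigoplus_{s=0}^{m}\mathcal{R}_{ch}^{2s+1}$ and that its component in $\mathcal{R}_{ch}^{1}$ is exactly $r$. Granting this, \eqref{eq.2.2} follows at once by taking $-f_{m,s}$ to be the component of $\psi_m$ in $\mathcal{R}_{ch}^{2s+1}$, $s=1,\dots,m$; the square-summability of the chaos coefficients of each $f_{m,s}$ is automatic, since $\mathcal{R}_{ch}^{2s+1}$ is a closed subspace of $L^2(0,1)$ and $U=4W_1V$ is bounded on $L^2$ (the Volterra operator $V$ is bounded there, and $W_1$ is bounded on the $1$-periodic functions). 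Everything reduces to tracking how $V$, $W_0$, $W_1$, and hence $U$, move the chaos spaces $\mathcal{R}_{ch}^{d}$.

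First I would record how $W_0,W_1$ act: on monomials $W_0(r_{i_1}\cdots r_{i_d})=r_{i_1+1}\cdots r_{i_d+1}$ and $W_1(r_{i_1}\cdots r_{i_d})=r_0r_{i_1+1}\cdots r_{i_d+1}$ (and $W_1$ of a constant $c$ is $cr$), so by boundedness $W_0:\mathcal{R}_{ch}^{d}\to\mathcal{R}_{ch}^{d}$ and $W_1:\mathcal{R}_{ch}^{d}\to\mathcal{R}_{ch}^{d+1}$ for all $d\ge0$. The core computation is the identity $4Vr=1-\frac12\sum_{k=0}^{\infty}2^{-k}r_0r_{k+1}$, which one gets from the Walsh expansion of the sawtooth: $4Vr-1$ is the antiperiodic triangle wave, hence $r_0\,(4Vr-1)=s(2t)$ with $s(u)=2u-1$ on $[0,1)$, and $s(u)=-\frac12\sum_{k\ge0}2^{-k}r_k(u)$ by reading off the binary digits of $u$. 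In particular $4Vr\in\mathcal{R}_{ch}^{0}\oplus\mathcal{R}_{ch}^{2}$ with constant part equal to $1$.

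Next I would establish $4V:\mathcal{R}_{ch}^{d}\to\mathcal{R}_{ch}^{d-1}\oplus\mathcal{R}_{ch}^{d+1}$ for $d\ge1$. Writing a monomial in the form $r_{i_1}\cdots r_{i_d}=W_0^{k_1}W_1\cdots W_0^{k_d}W_1\,1$ and pushing $V$ to the right by the commutation relations \eqref{eq.2.1} — legitimate at each step because every tail $W_0^{k_j}W_1\cdots W_1\,1$ is a non-constant Walsh function, hence has zero mean — one reaches the innermost factor $W_1\,1=r$, past which $V$ cannot be moved ($1$ is not of zero mean). Collecting the factors $\frac12$ yields $4V(r_{i_1}\cdots r_{i_d})=2^{-i_d}W_0^{k_1}W_1\cdots W_0^{k_d}(4Vr)$, the outer word now carrying exactly $d-1$ copies of $W_1$. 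Substituting $4Vr=1+g$, $g\in\mathcal{R}_{ch}^{2}$: the summand $1$ gives $W_0^{k_1}W_1\cdots W_0^{k_d}(1)=r_{i_1}\cdots r_{i_{d-1}}\in\mathcal{R}_{ch}^{d-1}$, and the summand $g$ gives an element of $\mathcal{R}_{ch}^{2+(d-1)}=\mathcal{R}_{ch}^{d+1}$; boundedness of $V$ then extends this to all of $\mathcal{R}_{ch}^{d}$. Composing with $W_1$ gives $U:\mathcal{R}_{ch}^{d}\to\mathcal{R}_{ch}^{d}\oplus\mathcal{R}_{ch}^{d+2}$ for $d\ge1$, and $Ur=W_1(4Vr)=W_1\,1+W_1g=r+W_1g$ with $W_1g\in\mathcal{R}_{ch}^{3}$, so the $\mathcal{R}_{ch}^{1}$-component of $Ur$ equals $r$. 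The induction now closes: $\psi_0=r\in\mathcal{R}_{ch}^{1}$, and if $\psi_m\in\bigoplus_{s=0}^{m}\mathcal{R}_{ch}^{2s+1}$ has $\mathcal{R}_{ch}^{1}$-component $r$, then $\psi_{m+1}=U\psi_m\in\bigoplus_{s=0}^{m+1}\mathcal{R}_{ch}^{2s+1}$; since $U$ raises the chaos order of each summand by $0$ or $2$, only the $\mathcal{R}_{ch}^{1}$-summand of $\psi_m$, namely $r$, can contribute to the $\mathcal{R}_{ch}^{1}$-summand of $\psi_{m+1}$, and that contribution is the $\mathcal{R}_{ch}^{1}$-component of $Ur$, which is $r$.

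I expect the main obstacle to be the bookkeeping in the third paragraph: one must carry $V$ cleanly through the whole word $W_0^{k_1}W_1\cdots W_0^{k_d}W_1$ and, crucially, recognize that it has to stop at the last factor $W_1\,1=r$, so that the residual word carries one fewer $W_1$ and the core is $4Vr$ rather than $4V1=t$. This is precisely what confines the image of $4V$ to the two neighbouring chaos levels $d-1$ and $d+1$, and hence, after composing with $W_1$ and iterating from $\psi_0=r\in\mathcal{R}_{ch}^{1}$, what makes every chaos order occurring in \eqref{eq.2.2} odd.
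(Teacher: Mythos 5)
Your proposal is correct and follows essentially the same route as the paper: induction on $m$ via $\psi_{m+1}=U\psi_m$, the key identity $4Vr=1-W_1\lambda$ (your $g=-\frac12\sum_k 2^{-k}r_0r_{k+1}$ is exactly $-W_1\lambda$), and the commutation relations \eqref{eq.2.1} to push $V$ through the words $W_0^{k_1}W_1\cdots W_0^{k_d}W_1$, which is precisely how the paper shows $Uf_{m,s}=g_{m,s}-h_{m,s+1}$ with chaos orders $d$ and $d+2$. The only difference is presentational: you phrase the step as "$U$ maps $\mathcal{R}_{ch}^{d}$ into $\mathcal{R}_{ch}^{d}\oplus\mathcal{R}_{ch}^{d+2}$" instead of writing the explicit coefficient formulas the paper later reuses in Lemma~\ref{l.3}.
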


\begin{proof}
The proof is by induction on $m$. Let $m=1$. We define an auxiliary function
$$
\lambda=\sum_{k=0}^{\infty}\frac{1}{2^{k+1}}W_0^kW_11=\sum_{k=0}^{\infty}\frac{r_k}{2^{k+1}}.
$$
It is well known that $\lambda(t)=1-2t$, $0<t<1$ is a unique (accurate to factor)  continuous function on $ (0,1) $ which can be represented by a Rademacher series (chaos of order $ d = 1 $). Let us show that
\begin{equation}\label{eq.2.3}
Ur=r-W_1^2\lambda.
\end{equation}
Indeed, on the one hand we have
$$
4Vr(t)=\begin{cases}
4t, & 0\le t\le \frac12,\\
4(1-t), & \frac12\le t \le 1.
\end{cases}
$$
On the other hand
$$
W_1\lambda(t)=\begin{cases}
1-4t, & 0\le t\le \frac12,\\
4t-3, & \frac12\le t \le 1.
\end{cases}
$$
Therefore $4Vr=1-W_1\lambda$. This implies that  $4W_1Vr=W_11-W_1^2\lambda$, and \eqref{eq.2.3} is proved.
It is clear that $f_{1,1}=W_1^2\lambda\in\mathcal{R}_{ch}^3$.

Suppose that equation \eqref{eq.2.3} is true for some $m$.
Using the equation $\psi_ {m + 1} = U \psi_m $ we obtain from \eqref{eq.2.2}
$$
\psi_{m+1}=Ur-\sum_{s=1}^mUf_{m,s}.
$$
We know that $4Vr=1-W_1\lambda$,  $W_0^{k_d}1=1$, and $f_{m,s}\in\mathcal{R}_{ch}^d$ for $d=2s+1$. Using it and  \eqref{eq.2.1} we get
\begin{multline*}
U f_{m,s}=\frac{1}{2^{d-1}}\sum_{k_1,\dots,k_d\ge0}\frac{c_{k_1,\dots,k_d}}{2^{k_1+\dots+k_d}}W_1W_0^{k_1}\dots W_1W_0^{k_d}4Vr\\
=\frac{1}{2^{d-1}}\sum_{k_1,\dots,k_{d-1}\ge0}\frac{1}{2^{k_1+\dots+k_{d-1}}}\biggl(\sum_{k_d\ge0}\frac{c_{k_1,\dots,k_d}}{2^{k_d}}\biggr)
W_1W_0^{k_1}\dots W_1W_0^{k_{d-1}}r\\
-\frac{1}{2^{d-1}}\sum_{k_1,\dots,k_d\ge0}\frac{c_{k_1,\dots,k_d}}{2^{k_1+\dots+k_d}}W_1W_0^{k_1}\dots W_1W_0^{k_d}W_1\lambda
=g_{m,s}-h_{m,s+1}.
\end{multline*}
It is clear that $g_{m,s}\in\mathcal{R}_{ch}^d$ and  $h_{m,s+1}\in\mathcal{R}_{ch}^{d+2}$.
Therefore
$$
\psi_{m+1}=r-W_1^2\lambda-\sum_{s=1}^m(g_{m,s}-h_{m,s+1})=r-\sum_{s=1}^{m+1}f_{m+1,s},
$$
where we denoted
\begin{gather}\label{eq.2.4}
f_{m+1,1}=g_{m,1}+W_1^2\lambda, \\ \label{eq.2.5}
f_{m+1,s}=g_{m,s}-h_{m,s}, \qquad s=2,\dots,m, \\ \label{eq.2.6}
f_{m+1,m+1}=-h_{m,m+1}.
\end{gather}
To complete the proof, we note that $f_{m+1,s}\in\mathcal{R}_{ch}^d$ for $d=2s+1$, $s=1,\dots,m+1$.
\end{proof}

To prove the main theorem it is enough to study the dynamics of chaos of order $d=3$ in the representation of the spline $\psi_m$.

\begin{lemma}\label{l.3}
For any $m\in \mathbb N$
$$
f_{m,1}=\gamma_mW_1^2r+W_1^2\lambda,
$$
where $\gamma_m=\frac29(1-\frac{1}{4^{m-1}})$.
\end{lemma}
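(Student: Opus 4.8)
The plan is to argue by induction on $m$, exploiting the recursion hidden inside the proof of Lemma~\ref{l.2}. There it is shown that $f_{m+1,1}=g_{m,1}+W_1^2\lambda$ (formula \eqref{eq.2.4}), where $g_{m,1}$ arises from the splitting $Uf_{m,1}=g_{m,1}-h_{m,2}$ with $g_{m,1}\in\mathcal{R}_{ch}^{3}$ and $h_{m,2}\in\mathcal{R}_{ch}^{5}$; since these two chaos subspaces are orthogonal, $g_{m,1}$ is exactly the order-$3$ chaos component of $Uf_{m,1}$. So, granting the inductive hypothesis $f_{m,1}=\gamma_mW_1^2r+W_1^2\lambda$, everything reduces to identifying the order-$3$ components of $U(W_1^2r)$ and of $U(W_1^2\lambda)$. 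The base case $m=1$ is immediate: $\gamma_1=\frac29(1-1)=0$ and $f_{1,1}=W_1^2\lambda$ by the computation in the proof of Lemma~\ref{l.2}. Throughout I use that $W_1^21=r_0r_1=W_1r$ and $W_1^31=r_0r_1r_2=W_1^2r$, and that $W_1$ sends $\mathcal{R}_{ch}^{d}$ isometrically into $\mathcal{R}_{ch}^{d+1}$ (it prepends the factor $r_0$ and shifts every index up by one, with no collisions), so $W_1^3$ raises chaos order by exactly three.

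For $U(W_1^2r)$: applying the commutation relations \eqref{eq.2.1} twice — legitimate since both $r$ and $W_1r$ have zero mean — gives $V(W_1^2r)=\frac14W_1^2Vr$, hence $U(W_1^2r)=4W_1V(W_1^2r)=W_1^3(Vr)$. Now the identity $4Vr=1-W_1\lambda$, established in the proof of Lemma~\ref{l.2}, yields $U(W_1^2r)=\frac14W_1^3(1-W_1\lambda)=\frac14W_1^2r-\frac14W_1^4\lambda$. Since $W_1^4\lambda\in\mathcal{R}_{ch}^{5}$, the order-$3$ component of $U(W_1^2r)$ is $\frac14W_1^2r$.

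For $U(W_1^2\lambda)$: the same two applications of \eqref{eq.2.1} (now using that $\lambda$ has zero mean and that $W_1\lambda$ is antiperiodic) give $V(W_1^2\lambda)=\frac14W_1^2V\lambda$, hence $U(W_1^2\lambda)=W_1^3(V\lambda)$. What I need is that $V\lambda=\frac16+\varphi$ where $\varphi$ is a chaos of order at least $2$; then $W_1^3\varphi$ is a sum of chaoses of orders at least $5$, so the order-$3$ component of $U(W_1^2\lambda)$ is $\frac16W_1^31=\frac16W_1^2r$. The mean of $V\lambda$ is $\int_0^1t(1-t)\,dt=\frac16$, and $V\lambda$ carries no order-$1$ component: since $\lambda(t)=1-2t$, the function $V\lambda(t)=t-t^2$ is symmetric about $t=\frac12$, whereas every $r_k$ — being $1$-periodic and odd — is antisymmetric about $t=\frac12$, so $(V\lambda,r_k)=0$ for all $k$. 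This chaos-order bookkeeping around $V\lambda$ is the one step requiring genuine care. If one prefers an explicit expression, the identity $2\lambda=r+W_0\lambda$ together with \eqref{eq.2.1} and $4Vr=1-W_1\lambda$ gives $V\lambda=\frac16-\frac18\sum_{k\ge0}4^{-k}W_0^kW_1\lambda$ with each $W_0^kW_1\lambda\in\mathcal{R}_{ch}^{2}$, which also pins down $\varphi$.

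Combining the two computations, the order-$3$ component of $Uf_{m,1}=\gamma_mU(W_1^2r)+U(W_1^2\lambda)$ is $g_{m,1}=\bigl(\frac{\gamma_m}{4}+\frac16\bigr)W_1^2r$, so by \eqref{eq.2.4}, $f_{m+1,1}=\bigl(\frac{\gamma_m}{4}+\frac16\bigr)W_1^2r+W_1^2\lambda$, i.e. $\gamma_{m+1}=\frac14\gamma_m+\frac16$. With $\gamma_1=0$ this linear recursion has solution $\gamma_m=\frac16\sum_{j=0}^{m-2}4^{-j}=\frac29\bigl(1-4^{-(m-1)}\bigr)$, the asserted value; equivalently one checks directly that $\frac14\cdot\frac29\bigl(1-4^{-(m-1)}\bigr)+\frac16=\frac29\bigl(1-4^{-m}\bigr)$. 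Apart from the $V\lambda$ analysis, every step is a short formal manipulation of the operators $V$, $W_0$, $W_1$, so that is where the real content of the proof lies.
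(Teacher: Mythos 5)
Your proof is correct, and its skeleton coincides with the paper's: both argue by induction with base case $f_{1,1}=W_1^2\lambda$, both invoke \eqref{eq.2.4}, and both reduce the inductive step to the recursion $\gamma_{m+1}=\frac{\gamma_m}{4}+\frac16$, which is then solved the same way. Where you genuinely diverge is in how $g_{m,1}$ is evaluated. The paper expands $f_{m,1}=\gamma_mW_1^2r+W_1^2\lambda$ in the canonical form of Lemma~\ref{l.2}, reads off the coefficients $c_{0,0,0}=\gamma_m+\frac12$, $c_{0,0,k}=\frac{1}{2^{k+1}}$, and substitutes them into the explicit formula for $g_{m,s}$ derived in the proof of Lemma~\ref{l.2}; there the constant $\frac16$ arises as $\sum_{k\ge1}2^{-(2k+1)}$. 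You instead apply $U$ directly to the two closed-form summands, getting $U(W_1^2r)=W_1^3Vr$ and $U(W_1^2\lambda)=W_1^3V\lambda$ from two uses of \eqref{eq.2.1}, and then project onto the order-$3$ chaos; your $\frac16$ is $\int_0^1V\lambda$, and the symmetry argument showing that $V\lambda-\frac16$ has no order-$1$ Rademacher component (so that $W_1^3\bigl(V\lambda-\frac16\bigr)$ lands in chaos of order at least $5$) is the one nonmechanical ingredient, and it is handled correctly. The trade-off: your route needs only the qualitative content of \eqref{eq.2.4} plus orthogonality of chaoses of distinct orders, avoiding the explicit coefficient formula for $g_{m,s}$, whereas the paper's version is a purely mechanical coefficient calculation with no projection argument; both are of comparable length and yield the identical recursion since $\frac{\gamma_m}{4}+\frac16=\frac{\gamma_m+\frac23}{4}$.
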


\begin{proof}
In the proof of Lemma 2 we have obtained the equality $f_{1,1}=W_1^2\lambda$. It is Lemma \ref{l.3} for $m=1$.
Suppose that the equality $f_{m,1}=\gamma_mW_1^2r+W_1^2\lambda$ is true for some $m$. Write it in the form
$$
f_{m,1}=W_1^2(\gamma_m+\tfrac12)r+W_1^2\sum_{k=1}^{\infty}\frac{1}{2^{k+1}}W_0^kr.
$$
Using notation from Lemma \ref{l.2} we have
$c_{0,0,0}=\gamma_m+\frac12$,
$c_{0,0,k}=\frac{1}{2^{k+1}}$ for $k\ge1$, and $c_{k_1,k_2,k_3}=0$
for $k_1\neq 0$ or $k_2\neq 0$.
It implies that
$$
g_{m,1}=\frac14\sum_{k_1,k_2\ge0}\frac{1}{2^{k_1+k_2}}\biggl(\sum_{k_3\ge0}\frac{c_{k_1,k_2,k_3}}{2^{k_3}}\biggr)W_1W_0^{k_1}W_1W_0^{k_2}r.
$$
Since $\sum_{k=1}^{\infty}\frac{1}{2^{2k+1}}=\frac16$, it follows that
$$
\sum_{k_3\ge0}\frac{c_{k_1,k_2,k_3}}{2^{k_3}}=\begin{cases}
\gamma_m+\frac12+\frac16,   & k_1=k_2=0,\\
0,   & { k_1+k_2}\neq 0.
\end{cases}
$$
So we have the equality
$$
g_{m,1}=\frac{\gamma_m+\frac23}{4}W_1^2r.
$$
Combining this equality and \eqref{eq.2.4} we get
$$
f_{m+1,1}=\frac{\gamma_m+\frac23}{4}W_1^2r+W_1^2\lambda.
$$
Since $\gamma=\frac29$ is the unique fixed point of the mapping $\gamma\mapsto\frac{\gamma+\frac23}{4}$, it follows that the sequence
$\gamma_m=\frac29(1-\frac{1}{4^{m-1}})$ is a solution of the recurrent relations
$$
\gamma_1=0, \qquad \gamma_{m+1}=\frac{\gamma_m+\frac23}{4}, \qquad m=1,2,\dots.
$$
This concludes the proof.
\end{proof}

\section{Simple Walsh spectra and orthogonality of affine systems}\label{s.3}

Let us consider all possible products of operators $W_0,W_1$:
$$
W^{\alpha}=W_{\alpha_0}\dots W_{\alpha_{k-1}}, \qquad \alpha=(\alpha_0,\dots,\alpha_{k-1})\in\mathbb{A}:=\bigcup_{k=0}^{\infty}\{0,1\}^k.
$$
It is clear that
$$
(W^{\alpha}r)(t)=r^{\alpha_0}(2^0t)\dots r^{\alpha_{k-1}}(2^{k-1}t)r(2^kt)=
r_k(t)\prod_{\nu=0}^{k-1}r_{\nu}^{\alpha_{\nu}}(t).
$$
Therefore the family $(W^{\alpha}r)_{\alpha\in\mathbb{A}}$ is the Walsh system $(w_n)_{n\ge1}$ (without the function $w_0=1$) in Paley enumeration $n=\sum_{\nu=0}^{k-1}\alpha_{\nu}2^{\nu}+2^k$.
For any $1$-periodic function $f$ the family $(W^{\alpha}f)_{\alpha\in\mathbb{A}}$ is called an affine Walsh system, generated by $f$ (\cite{AT},~\cite{T1}).

By analogy we consider the dilation~-~translation operators
$$
S_0=\frac{W_0+W_1}{\sqrt{2}}, \qquad S_1=\frac{W_0-W_1}{\sqrt{2}}
$$
and their various products
$$
S^{\alpha}=S_{\alpha_0}\dots S_{\alpha_{k-1}}, \qquad \alpha=(\alpha_0,\dots,\alpha_{k-1})\in\mathbb{A}.
$$
It it easy to see that the family $(S^{\alpha}f)_{\alpha\in\mathbb{A}}$ coincides with the sequence $(f_n)_{n\ge1}$ of dilation and translation of a function $f\chi_{[0,1]}$. In particular, $(S^{\alpha}r)_{\alpha\in\mathbb{A}}$ is the Haar system $(\chi_n)_{n\ge1}$ (without the function $\chi_0=1$) in natural enumeration $n=\sum_{\nu=1}^k\alpha_{k-\nu}2^{\nu-1}+2^k$.
For any $1$-periodic function $f$ the family $(S^{\alpha}f)_{\alpha\in\mathbb{A}}$ is also called the affine Haar system, generated by $f$.

Let $L_0^2$ denote the set of all $1$-periodic functions $f\in L^2(0,1)$ for which $\int_0^1f(t)\,dt=0$.

\begin{definition}\label{d.4}
The Walsh spectrum of $f\in L_0^2$ is the set of multi-indices
$$
\mbox{Spec}_W(f)=\{\beta\in\mathbb{A}:(f,W^{\beta}r)\neq0\}.
$$
\end{definition}
Let $\alpha\beta=(\alpha_0,\dots,\alpha_{k-1},\beta_0,\dots,\beta_{l-1})$ be a concatenation of multi-indices $\alpha,\beta\in\mathbb{A}$
and $|\alpha|=k$ be a length of multi-index $\alpha$.

\begin{definition}\label{d.5}
We say that the function $f\in L_0^2$ has a simple Walsh spectrum if from the equality $\alpha\beta=\alpha'\beta'$, it follows that $\alpha=\alpha',\ \beta=\beta'$, where $\alpha,\alpha'\in\mathbb{A}$ and $\beta,\beta'\in\mbox{Spec}_W(f)$.
\end{definition}

\begin{lemma}\label{l.4}
If the function $f\in L_0^2$, $f\neq0$ has a simple Walsh spectrum, then affine Walsh and Haar systems $(W^{\alpha}f)_{\alpha\in\mathbb{A}}$, $(S^{\alpha}f)_{\alpha\in\mathbb{A}}$ are orthogonal.
\end{lemma}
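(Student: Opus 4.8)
The plan is to move everything to the Walsh side and finish by Parseval. Since $f\in L_0^2$, expand it in the Walsh system: $f=\sum_{\beta\in\mathbb A}c_\beta W^\beta r$ with $c_\beta=(f,W^\beta r)$, so $c_\beta\ne0$ precisely for $\beta\in\mathrm{Spec}_W(f)$ and $\sum_\beta c_\beta^2=\|f\|^2>0$. Each $W_i$ is an $L^2$-isometry of $1$-periodic functions, so $W^\alpha$ is bounded and $W^\alpha f=\sum_{\beta}c_\beta W^{\alpha\beta}r$ converges in $L^2$, where the $W^{\alpha\beta}r$ are genuine Walsh functions, distinct for distinct $\beta$. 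Hence the Walsh coefficient of $W^\gamma r$ in $W^\alpha f$ is $c_\beta$ when $\gamma=\alpha\beta$ for some $\beta$ and $0$ otherwise, and Parseval in the (orthonormal) Walsh system gives, for all $\gamma,\gamma'\in\mathbb A$,
$$
(W^\gamma f,\,W^{\gamma'}f)=\sum_{\substack{\beta,\beta'\in\mathrm{Spec}_W(f)\\ \gamma\beta=\gamma'\beta'}}c_\beta\,c_{\beta'}.\qquad(\star)
$$

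Next I would evaluate $(\star)$ using the simple-spectrum hypothesis. If $\gamma=\gamma'$, the relation $\gamma\beta=\gamma'\beta'$ forces $\beta=\beta'$, so $(W^\gamma f,W^\gamma f)=\sum_{\beta\in\mathrm{Spec}_W(f)}c_\beta^2=\|f\|^2$ (as it must, $W^\gamma$ being an isometry). If $\gamma\ne\gamma'$, the index set in $(\star)$ is empty: when $|\gamma|=|\gamma'|$ this is clear since $\gamma\beta=\gamma'\beta'$ then gives $\gamma=\gamma'$; when, say, $|\gamma|<|\gamma'|$, the equality $\gamma\beta=\gamma'\beta'$ shows $\gamma$ is a prefix of $\gamma'$, say $\gamma'=\gamma\delta$ with $|\delta|=|\gamma'|-|\gamma|\ge1$, and then $\beta=\delta\beta'$; reading this as the equality $\delta\beta'=\varnothing\beta$ with $\delta,\varnothing\in\mathbb A$ and $\beta',\beta\in\mathrm{Spec}_W(f)$, Definition~\ref{d.5} forces $\delta=\varnothing$, a contradiction. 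Therefore $(W^\gamma f,W^{\gamma'}f)=\|f\|^2\delta_{\gamma,\gamma'}$ for all $\gamma,\gamma'\in\mathbb A$, which in particular says the affine Walsh system $(W^\alpha f)_{\alpha\in\mathbb A}$ is orthogonal (with common norm $\|f\|\ne0$), settling the Walsh half.

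For the affine Haar system I would first record the expansion
$$
S^\alpha=2^{-k/2}\sum_{\gamma\in\{0,1\}^k}(-1)^{\langle\alpha,\gamma\rangle}W^\gamma,\qquad k=|\alpha|,
$$
where $\langle\alpha,\gamma\rangle=\sum_j\alpha_j\gamma_j$; this follows by multiplying out $S^\alpha=S_{\alpha_0}\cdots S_{\alpha_{k-1}}=\prod_j\tfrac1{\sqrt2}(W_0+(-1)^{\alpha_j}W_1)$ and keeping the non-commuting factors in their prescribed order. Applying it to $f$ and inserting the formula $(W^\gamma f,W^{\gamma'}f)=\|f\|^2\delta_{\gamma,\gamma'}$ just proved, only diagonal pairs $\gamma=\gamma'$ contribute; in particular $(S^\alpha f,S^{\alpha'}f)=0$ unless $|\alpha|=|\alpha'|$, and for $|\alpha|=|\alpha'|=k$ one is left with $2^{-k}\|f\|^2\sum_{\gamma\in\{0,1\}^k}(-1)^{\langle\alpha,\gamma\rangle+\langle\alpha',\gamma\rangle}$. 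This sum factors over coordinates into $\prod_{j=0}^{k-1}(1+(-1)^{\alpha_j+\alpha'_j})$, equal to $2^k$ when $\alpha=\alpha'$ and $0$ as soon as $\alpha_j\ne\alpha'_j$ for some $j$. Hence $(S^\alpha f,S^{\alpha'}f)=\|f\|^2\delta_{\alpha,\alpha'}$, so $(S^\alpha f)_{\alpha\in\mathbb A}$ is orthogonal, which completes the proof.

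The two points that need care are: (i) the term-by-term identity $(\star)$, which I would justify via Parseval/Bessel in the complete Walsh system rather than by informally rearranging a double series; and (ii) the prefix subcase $|\gamma|<|\gamma'|$ in the evaluation of $(\star)$, which is the only place where Definition~\ref{d.5} is actually invoked, the key move being to recast $\beta=\delta\beta'$ as the instance $\delta\beta'=\varnothing\beta$ of its hypothesis. I expect (ii) to be the crux of the argument; the equal-length cases and the Hadamard-type character sum in the Haar part are routine.
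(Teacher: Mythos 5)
Your proof is correct and follows essentially the same route as the paper's: the Walsh half is the observation that the spectra $\{\alpha\beta\}_{\beta\in\mathrm{Spec}_W(f)}$ are pairwise disjoint (your $(\star)$ is just this phrased via Parseval), and the Haar half is the same unitary Walsh--Hadamard change of basis, written out explicitly as $\varepsilon_{\alpha\gamma}=2^{-k/2}(-1)^{\langle\alpha,\gamma\rangle}$. The only stylistic difference is your point (ii): the case analysis by length is unnecessary, since Definition~\ref{d.5} applies verbatim to the equality $\gamma\beta=\gamma'\beta'$ and yields $\gamma=\gamma'$ at once, which is exactly what the paper does.
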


\begin{proof}
Firstly, we note that if Walsh spectra of $g$ and $h$ are disjoint then these functions are orthogonal: $(g,h)=0$.
Then we consider the Fourier~-~Walsh series of the function $f\in L_0^2$
$$
f=\sum_{\beta\in\mbox{Spec}_W(f)}(f,W^{\beta}r)W^{\beta}r.
$$
By the definition of operators $W^{\alpha}$ we have
$$
W^{\alpha}f=\sum_{\beta\in\mbox{Spec}_W(f)}(f,W^{\beta}r)W^{\alpha\beta}r.
$$
It follows that
$$
\mbox{Spec}_W(W^{\alpha}f)=\{\alpha\beta\}_{\beta\in\mbox{Spec}_W(f)}.
$$
Show that $\mbox{Spec}_W(W^{\alpha}f)\cap\mbox{Spec}_W(W^{\alpha'}f)=\emptyset$
for $\alpha\ne\alpha'$. Indeed, suppose that $\mbox{Spec}_W(W^{\alpha}f)\cap\mbox{Spec}_W(W^{\alpha'}f)\neq\emptyset$. The common point of these spectra is $\alpha\beta=\alpha'\beta'$, where $\alpha,\alpha'\in\mathbb{A}$ and $\beta,\beta'\in\mbox{Spec}_W(f)$. Using the definition of a simple spectrum we get $\alpha=\alpha'$, which is impossible.
Thus we have shown that $(W^{\alpha}f,W^{\alpha'}f)=0$ for $\alpha\ne\alpha'$. Since $\|W^{\alpha}f\|=\|f\|>0$, it follows that the affine Walsh system $(W^{\alpha}f)_{\alpha\in\mathbb{A}}$ is orthogonal.

Show that the affine Haar system $(S^{\alpha}f)_{\alpha\in\mathbb{A}}$ is orthogonal too.
Since
$$
w_{2^k+i}=\sum_{j=0}^{2^k-1}\varepsilon_{i,j}\chi_{2^k+j}, \qquad i=0,\dots,2^k-1,
$$
where $(\varepsilon_{i,j})$ is a unitary Walsh matrix, it follows that
$$
\chi_{2^k+j}=\sum_{i=0}^{2^k-1}\varepsilon_{i,j}w_{2^k+i}, \qquad j=0,\dots,2^k-1.
$$
In our notation, the last equation can be written as
$$
S^{\alpha}r=\sum_{|\beta|=k}\varepsilon_{\alpha,\beta}W^{\beta}r, \qquad |\alpha|=k.
$$
Therefore
$$
S^{\alpha}f=\sum_{|\beta|=k}\varepsilon_{\alpha\beta}W^{\beta}f, \qquad |\alpha|=k.
$$
Let  $(W^{\alpha}f)_{\alpha\in\mathbb{A}}$ be an orthogonal system. Then
$$
(S^{\alpha}f,S^{\alpha'}f)=
\sum_{|\beta|=k}\sum_{|\beta'|=k'}
\varepsilon_{\alpha\beta}\varepsilon_{\alpha'\beta'}(W^{\beta}f,W^{\beta'}f)=0
$$
for  $k'\neq k$. If $k'=k$ then $(W^{\beta}f,W^{\beta'}f)\ne 0 $ for  $\beta'=\beta$ only. Therefore
$$
(S^{\alpha}f,S^{\alpha'}f)=
\|f\|_2^2\sum_{|\beta|=k}\varepsilon_{\alpha\beta}\varepsilon_{\alpha'\beta}=\delta_{\alpha,\alpha'}\|f\|_2^2.
$$
This completes the proof.
\end{proof}

\section{Proofs of theorems}\label{s.4}

To prove Theorems~\ref{t.1} and~\ref{t.2} we will use an uniform method.
For any $f\in L_0^2$ we define a transform $T_f$ of the Haar and Walsh systems into the affine Haar and Walsh systems respectively
by the equivalence relations
$$
T_fS^{\alpha}r=S^{\alpha}f, \qquad T_fW^{\alpha}r=W^{\alpha}f, \qquad \alpha\in\mathbb{A}.
$$
Extend this transform $T_f$ by linearity on a linear manifold of all Haar or Walsh polynomials (both are the same) with zero mean.

By definition the transform $T_f$ commutes with operators $S_0,S_1$ and  with
$W_0,W_1$ equivalently. Note that $T_f$ is not bounded in $L_0^2$  in general case.
The conditions under which the operator $T_f$ is bounded were obtained in~\cite{T2}.
It is clear that for orthogonal affine systems $(S^{\alpha}f)_{\alpha\in\mathbb{A}}$, $(W^{\alpha}f)_{\alpha\in\mathbb{A}}$ we have
\begin{equation}\label{eq.4.1}
\|T_fx\|=\|f\|\|x\|, \qquad x\in L_0^2,
\end{equation}
and consequently $\|T_f\|=\|f\|$.

We show that if $f$ is antiperiodization of a function from $\mathcal{R}_{ch}^d$, i.e.
$$
f=\sum_{k_1,\dots,k_d\ge 0}c_{k_1,\dots,k_d}W_1W_0^{k_1}\dots W_1W_0^{k_d}r
\in \mathcal{R}_{ch}^{d+1},
$$
then equality \eqref{eq.4.1} is carried out.
Indeed, for the Walsh spectrum of $f$ we have
$$
\mbox{Spec}_W(f)\subset\{(1,{\bf0}_{k_1},\dots,1,{\bf0}_{k_d})\}_{k_1,\dots,k_d\ge0},
$$
where  ${\bf 0}_k=(0,\dots,0)$, $k\ge0$ is multi-index consisting of $k$ zeros. If $k=0$ then we set that ${\bf 0}_k$ is an empty index.
It follows from Definition~\ref{d.5} that the function $f$ has a simple Walsh spectrum.
By Lemma~\ref{l.4} affine Walsh and Haar affine systems generated by $f$ are orthogonal.

Write equation~\eqref{eq.2.2} in the operator form
$$
T_{\psi_m}=I-\sum_{s=1}^mT_{f_{m,s}}.
$$
Since $\psi_m$ and all functions $f_{m,s}\in\mathcal{R}_{ch}^{2s+1}$ are antiperiodic, it follows that $\|T_{f_{m,s}}\|=\|f_{m,s}\|$. Therefore
$$
\|I-T_{\psi_m}\|\le\sum_{s=1}^m\|f_{m,s}\|, \qquad m=1,2,\dots.
$$
Let us prove that
\begin{equation}\label{eq.4.2}
\sum_{s=1}^m\|f_{m,s}\|<\frac{9}{10}
\end{equation}
for all $m=1,2,\dots$. In Lemma~\ref{l.3} we obtained the equality $f_{m,1}=\gamma_mW_1^2r+W_1^2\lambda$ from which we find
$$
\|f_{m,1}\|=\biggl((\gamma_m+\tfrac12)^2+\sum_{k=1}^{\infty}\frac{1}{4^{k+1}}\biggr)^{1/2}
<\sqrt{\biggl(\frac29+\frac12\biggr)^2+\frac{1}{12}}=\frac79.
$$
Using the notation of Lemma~\ref{l.2}, we have
\begin{multline*}
\|g_{m,s}\|=\frac{1}{2^{d-1}}\biggl(\sum_{k_1,\dots,k_{d-1}\ge0}\frac{1}{4^{k_1+\dots+k_{d-1}}}
\biggl(\sum_{k_d\ge0}\frac{c_{k_1,\dots,k_d}}{2^{k_d}}\biggr)^2\biggr)^{1/2}\\
\le\frac{1}{2^{d-1}}\biggl(\frac43\sum_{k_1,\dots,k_d\ge0}c_{k_1,\dots,k_d}^2\biggr)^{1/2}=\frac{2\|f_{m,s}\|}{4^s\sqrt{3}}
\le\frac{2\|f_{m,s}\|}{16\sqrt{3}},
\end{multline*}
for $s=2,...,m$. In the last inequality we have used $d=2s+1$. Now we consider the equality
$$
h_{m,s+1}=\frac{1}{2^{d-1}}\sum_{k_1,\dots,k_d\ge0}\frac{c_{k_1,\dots,k_d}}{2^{k_1+\dots+k_d}}W_1W_0^{k_1}\dots W_1W_0^{k_d}W_1\lambda
$$
as expansion into series with respect to an affine Walsh system  with the generating function $W_1\lambda$.
Since $\lambda\in\mathcal{R}_{ch}^1$, it follows that this affine system is orthogonal and $\|{W_1\lambda}\|=\|\lambda\|=\frac{1}{\sqrt{3}}$. Therefore
$$
\|h_{m,s+1}\|=\frac{1}{2^{d-1}\sqrt{3}}\biggl(\sum_{k_1,\dots,k_d\ge0}\frac{c_{k_1,\dots,k_d}^2}{4^{k_1+\dots+k_d}}\biggr)^{1/2}
\le\frac{\|f_{m,s}\|}{4^s\sqrt{3}}\le\frac{\|f_{m,s}\|}{16\sqrt{3}}.
$$
Now we need to estimate the norm $\|h_{m,2}\|=\|h_{m,s+1}\|$ for $s=1$. To make this we will use Lemma \ref{l.3}. Under the proof of this lemma we obtained $c_{0,0,0}=\gamma_m+\frac12$, $c_{0,0,k}=\frac{1}{2^{k+1}}$ for  $k\ge1$,
and  $c_{k_1,k_2,k_3}=0$ for $|k_1|+|k_2|>0$. Using these values of
$c_{k_1,k_2,k_3}$ we find
\begin{multline*}
h_{m,2}=\frac14\sum_{k_1,k_2,k_3\ge0}\frac{c_{k_1,k_2,k_3}}{2^{k_1+k_2+k_3}}W_1W_0^{k_1}W_1W_0^{k_2}W_1W_0^{k_3}W_1\lambda\\
=\frac14\biggl(c_{0,0,0}W_1^4\lambda+\sum_{k=1}^{\infty}\frac{c_{0,0,k}}{2^k}W_1^3W_0^kW_1\lambda\biggr)\\
=\frac14W_1^3\biggl((\gamma_m+\tfrac12)W_1\lambda+\sum_{k=1}^{\infty}\frac{1}{2^{2k+1}}W_0^kW_1\lambda\biggr).
\end{multline*}
Therefore
$$
\|h_{m,2}\|=\frac{1}{4\sqrt{3}}\biggl((\gamma_m+\tfrac12)^2+\sum_{k=1}^{\infty}\frac{1}{4^{2k+1}}\biggr)^{1/2}
<\frac{1}{4\sqrt{3}}\sqrt{\biggl(\frac29+\frac12\biggr)^2+\frac{1}{60}}=a,
$$
where  $a=\frac{\sqrt{218}}{36\sqrt{15}}<0,106$.
Using  \eqref{eq.2.5}, \eqref{eq.2.6}, and obtained estimates we find
$$
\sum_{s=2}^{m+1}\|f_{m+1,s}\|\le\|h_{m,2}\|+\sum_{s=2}^m(\|g_{m,s}\|+\|h_{m,s+1}\|)
<a+\frac{\sqrt{3}}{16}\sum_{s=2}^m\|f_{m,s}\|.
$$
It follows that for all $m$
$$
\sum_{s=2}^m\|f_{m,s}\|<a\sum_{n=0}^{\infty}\biggl(\frac{\sqrt{3}}{16}\biggr)^n
<\frac{0,106}{1-0,11}<0,12.
$$
Finally, we have
$$
\sum_{s=1}^m\|f_{m,s}\|<\frac79+0,12<\frac{9}{10}
$$
and inequality~\eqref{eq.4.2} is proved. It means that $\|I-T_{\psi_m}\|<\frac{9}{10}$. It follows that operator $T_{\psi_m}$ is isomorphism of $L_0^2$, and a spline affine system  $(\psi_{m,n})_{n\ge0}$ is a Riesz basis. We have also
$$
\biggl\|\sum_{n=0}^{\infty}c_n(\chi_n-\psi_{m,n})\biggr\|\le\frac{9}{10}\|c\|_2.
$$
It follows that  $A=\frac{1}{10}$, $B=\frac{19}{10}$ are universal Riesz bounds for spline affine systems. This completes the proof.
$\square$

\begin{remark}\rm
Granados~\cite{Gra} considered the functions
$$
\rho_m=\widetilde{U}^m1, \qquad m=1,2,\dots,
$$
where $\widetilde{U}=4VW_1$. She proved that affine Walsh system (the author called it Walsh wavelets)
\begin{equation}\label{eq.5.1}
\rho_m(2^{k+1}t)w_n(t),  \qquad k=0,1,\dots, \qquad n=2^k,\dots,2^{k+1}-1,
\end{equation}
is a frame when  $m=1$ and  $m=2$.
\end{remark}

The following statement can be obtained from Theorems~\ref{t.1} and~\ref{t.2}.

\begin{cor}
For any  $m\ge3$ function system~\eqref{eq.5.1} along with the function $1$ is a Riesz basis.
The Riesz bounds of all systems~\eqref{eq.5.1}  can be selected independently of $m$.
\end{cor}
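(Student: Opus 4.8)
The plan is to recognize that, after reindexing, the system \eqref{eq.5.1} together with the constant $1$ is exactly the affine Walsh system generated by $\psi_m$, and then to invoke the operator $T_{\psi_m}$ already analysed in Section~\ref{s.4}. First I would prove the identity $\psi_m=W_1\rho_m$. Since $U=4W_1V$, $\widetilde U=4VW_1$ and $W_11=r$, shifting the scalar factors gives
$$
\rho_m=\widetilde U^m1=4V\,(4W_1V)^{m-1}W_11=4V\,U^{m-1}r=4V\psi_{m-1}
$$
(with the convention $\psi_0=r$ and using Lemma~\ref{l.1}), hence $W_1\rho_m=4W_1V\psi_{m-1}=U\psi_{m-1}=\psi_m$. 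Consequently, if $n=2^k+\sum_{\nu=0}^{k-1}\alpha_\nu 2^\nu$ is the Paley index of $\alpha=(\alpha_0,\dots,\alpha_{k-1})\in\mathbb A$, then
$$
\rho_m(2^{k+1}t)\,w_n(t)=r_k(t)\rho_m(2^{k+1}t)\prod_{\nu=0}^{k-1}r_\nu^{\alpha_\nu}(t)=(W_1\rho_m)(2^kt)\prod_{\nu=0}^{k-1}r_\nu^{\alpha_\nu}(t)=(W^\alpha\psi_m)(t).
$$
Thus the family \eqref{eq.5.1} coincides with $(W^\alpha\psi_m)_{\alpha\in\mathbb A}$, and adjoining $w_0=1$ produces the family $\{1\}\cup(W^\alpha\psi_m)_{\alpha\in\mathbb A}$.

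Second I would reuse the transform $T_{\psi_m}$. By construction it is a single operator on the (common) linear manifold of zero-mean Walsh/Haar polynomials, with $T_{\psi_m}W^\alpha r=W^\alpha\psi_m$ and $T_{\psi_m}S^\alpha r=S^\alpha\psi_m$, and the bound $\|I-T_{\psi_m}\|<\frac{9}{10}$ obtained in the proof of Theorems~\ref{t.1} and~\ref{t.2} is a statement about this operator itself (it rests only on $\|T_{f_{m,s}}\|=\|f_{m,s}\|$, valid because each $f_{m,s}$ is an antiperiodization of a chaos and hence has simple Walsh spectrum). Therefore the very same argument that makes $(S^\alpha\psi_m)_{\alpha\in\mathbb A}$ a Riesz basis of $L_0^2$ with bounds $\frac1{10},\frac{19}{10}$ also shows, via the image of the orthonormal Walsh basis $(W^\alpha r)_{\alpha\in\mathbb A}$ of $L_0^2$, that $(W^\alpha\psi_m)_{\alpha\in\mathbb A}$ is a Riesz basis of $L_0^2$ with the same bounds $\frac1{10},\frac{19}{10}$, independently of $m$.

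Finally I would restore the constant. In $L^2(0,1)=\mathbb R\cdot1\oplus L_0^2$ one has $1\perp L_0^2$ and $\|1\|=1$, and since $\frac1{10}\le1\le\frac{19}{10}$, adjoining $1$ to a Riesz basis of $L_0^2$ with bounds $\frac1{10},\frac{19}{10}$ yields a Riesz basis of $L^2(0,1)$ with the same bounds: for $c=(c_n)_{n\ge0}\in\ell^2$ one gets $\frac1{10}\|c\|_2\le\|c_0\cdot1+T_{\psi_m}\sum_{n\ge1}c_nw_n\|\le\frac{19}{10}\|c\|_2$. Combined with the first step this is precisely the assertion for the family \eqref{eq.5.1} together with $1$, with the universal constants $\frac1{10}$ and $\frac{19}{10}$ (in particular for all $m\ge3$). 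The only delicate point is the bookkeeping of the first step — matching the Paley enumeration of $w_n$ with the multi-index action of $W^\alpha$ and checking $\psi_m=W_1\rho_m$; once this identification is in place the corollary is immediate from what has already been established, because $T_{\psi_m}$ realizes the affine Haar and the affine Walsh system simultaneously.
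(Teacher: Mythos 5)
Your proposal is correct and follows essentially the same route as the paper, which simply observes that the system~\eqref{eq.5.1} coincides with the affine Walsh system $(W^{\alpha}\psi_m)_{\alpha\in\mathbb{A}}$ and appeals to the $T_{\psi_m}$ analysis; you have merely supplied the details the paper leaves implicit (the identity $\psi_m=W_1\rho_m$, the Paley bookkeeping, and the transfer of the bound $\|I-T_{\psi_m}\|<\frac{9}{10}$ from the Haar to the Walsh orthonormal basis together with adjoining the constant).
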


\begin{proof}
It suffices to note that the system~\eqref{eq.5.1} coincides with affine Walsh system $(W^{\alpha}\psi_m)_{\alpha\in\mathbb{A}}$,
generated by spline $\psi_m$.
\end{proof}

This research was carried out with the financial
support of  the Russian Foundation for Basic Research (grant
no.~16-01-00152)

\end{document}